\documentclass[12pt, reqno]{amsart}

\usepackage{amssymb, amsmath, amsthm}
\usepackage[backref]{hyperref}
\usepackage[alphabetic,backrefs,lite]{amsrefs}
\usepackage{amscd}   
\usepackage{fullpage}
\usepackage[all]{xy} 
\usepackage{verbatim}

\DeclareFontEncoding{OT2}{}{} 


\usepackage{color}


\newtheorem{lemma}{Lemma}[section]
\newtheorem{theorem}[lemma]{Theorem}
\newtheorem*{theorem*}{Theorem}

\newtheorem{prop}[lemma]{Proposition}
\newtheorem{cor}[lemma]{Corollary}

\newtheorem{claim*}{Claim}
\newtheorem{thm}[lemma]{Theorem}

\newtheorem{notation}[lemma]{Notation}

\newtheorem{prob}{Problem}

\theoremstyle{definition}

\newtheorem{remark}[lemma]{Remark}


\newcommand{\Q}{{\mathbb Q}}

\newcommand{\N}{{\mathbb N}}
\newcommand{\Z}{{\mathbb Z}}


\newcommand{\calG}{{\mathcal G}}
\newcommand{\calH}{{\mathcal H}}

\newcommand{\calL}{{\mathcal L}}


\DeclareMathOperator{\Aut}{Aut}
\DeclareMathOperator{\Gal}{Gal}

\DeclareMathOperator{\id}{id}


\newcommand{\isom}{\cong}

\numberwithin{equation}{section}
\numberwithin{table}{section}


\title{First-order theory of a  field and its Inverse Galois Problem}
\author{Francesca Balestrieri}
\address{The American University of Paris \\ 5 Boulevard de La Tour-Maubourg\\
75007 Paris, France}
\email{fbalestrieri@aup.edu}
\author{Jennifer Park}
\address{The Ohio State University \\ Department of Mathematics \\ Columbus, OH 43210, USA}
\email{park.2720@osu.edu}
\urladdr{\url{https://u.osu.edu/park-2720/}}

\author{Alexandra Shlapentokh}
\address{East Carolina University\\ Department of Mathematics\\ Greenville, NC 27858, USA}
\email{shlapentokha@ecu.edu}
\urladdr{\url{http://myweb.ecu.edu/shlapentokha/}}
\date{\today}

\begin{document}
\maketitle
\begin{abstract}
Let $G$ be a finite group.  Then there exists a first-order statement $S(G)$ in the language of rings without parameters and depending only on $G$ such that, for any field $K$, we have that $K\models S(G)$ if and only if $K$ has a Galois extension with the Galois group isomorphic to $G$.  Further, there is an effective procedure which takes the table of multiplication of $G$ as its input and produces $S_G$. Therefore, given a field $K$, the Inverse Galois Problem for $K$, that is, the problem of deciding whether $K$ has a Galois extension with a particular Galois group  as input, is Turing reducible to the first-order theory of $K$. 
Similar results hold for the Finite Split Embedding Problem and the Inverse Automorphism Problem.
\end{abstract}


\section{Introduction}
This paper discusses the connection between the \emph{Inverse Galois Problem} (IGP) and the first-order theory of fields.  We state the IGP below.
\begin{prob}[Inverse Galois Problem over a field $K$ for a group $G$, IGP($K,G$)]
Let $K$ be a field, and let $G$ be a finite group. Does there exist a Galois extension $L$ of $K$ of degree $d$ with the Galois group of the extension isomorphic to $G$?
\end{prob}

This problem  has a long history dating back to the Kronecker-Weber theorem (which corresponds, in our notation, to IGP($\Q$, $\Z/n\Z$)) from the early XIX century. While many special cases of this problem and its variants are known \cite{Har87}, \cite{Koe04}, \cite{Pop96}, the general question remains open.

The first-order language of rings is the language $\calL_{R}=(0,1, +, \times)$ using universal and existential quantifiers.  The problem of deciding which statements of the language are true over a particular  ring $R$ can be reduced to the question whether the following type of statements is true:
\[
E_1 x_1\ldots E_k x_k P(x_1,\ldots,x_k)=0,
\]
where $E_i$ is either a universal or an existential quantifier ranging over $R$ and $P(x_1,\ldots,x_k)$ is a polynomial with coefficients in the ring.  

The fact that the Inverse Galois Problem for a field and a specific group can be coded into a first-order theory of a ring is apparently known (\cite{Koe04}), though, to the best of the authors' knowledge, an explicit specific statement about this does not exist in the literature, nor is it established how uniform such a statement would be. In this paper, we produce a statement $S(G)$, where $G$ is a finite group, in the first-order language of rings $\calL_R$ without any parameters such that for any field $K$ we have that $K\models S(G)$ if and only if the Inverse Galois Problem over K is solvable with Galois group $G$.  The sentence $S(G)$ depends only on the multiplication table of $G$ and there is an effective procedure to construct $S(G)$ from the multiplication table of the group.

We also construct first-order statements coding generalizations of the IGP, namely the \emph{Finite Split Embedding Problem} (FSEP) and what we call the \emph{Inverse Automorphism Problem} (IAP).

\subsection{Turing degrees and reduction}
We first define Turing degrees and give a precise formulation of the problem we want to address. (For a general reference on Turing degrees see, for example, \cite{Soare87}).

\subsubsection{Computable and listable sets and Turing degrees}
A subset $S \subset \Z$ is \textbf{computable} if there exists an algorithm (or a computer program terminating on every input) that determines membership in the set.
A subset $S \subset \Z$ is \textbf{listable} if there exists an algorithm (or a computer program) that lists the set.  Given two subsets $A, B \subset \Z$, we say that $A \leq_T B$ ($A$ is Turing reducible to $B$) if there exists an algorithm taking the characteristic function of $B$ as its input and generating the characteristic function of $A$. If $A\leq_T B$ and $B \leq_T A$, then we say that $A \equiv_T B$ ($A$ is Turing equivalent to $B$).  The relation $\equiv_T$ is an equivalence relation and the corresponding equivalence classes are called \emph{Turing degrees}.


\subsubsection{Encoding the collection of the isomorphism classes of finite groups and the theory of a field} \label{Hld}
We identify the set of isomorphism classes of finite groups with $\N$ via a map $\sigma$ defined in the following way:
\begin{itemize}
    \item Let $\calG$ be a collection of representatives of isomorphism classes of finite groups (note that $\calG$ is countable, so there is an effective bijection $\sigma: \calG \to \N$ with the image being exactly $\N$.) In other words, $\sigma $ attaches a natural number as a label to each (isomorphism class of a) finite group.
    \item We note that, if $G$ is some finite group, then  $\sigma(G)$ can be effectively determined once we are given a multiplication table of a group $G$ and, vice versa, we can effectively recover a multiplication table for $G$ from $\sigma(G)$. It is clear that this only depends on the isomorphism class of $G$ in $\calG$.
\end{itemize}
Let $K$ be any fixed field. For each $d \in \N$, let 
\[
\calH_{K,d} = \{n \in \N: \# \sigma^{-1}(n) =d  \textup{ and $K$ has a Galois extension with Galois group $\sigma^{-1}(n)$}\}.
\]
In other words, $\calH_{K,d}$ is the set of all labels corresponding under $\sigma$ to (isomorphism classes of) groups $G$ of size $d$ such that there is a Galois extension of $K$ with Galois group isomorphic to $G$.

\subsubsection{Main theorem} In this paper we investigate the relation between the first-order theory of fields and the Inverse Galois Problem. As a consequence, we are also able to study the Turing degree of $\calH_{K,d}$, by comparing the Turing degree of $\calH_{K,d}$ to the Turing degree of the first-order theory of $K$ in the language of rings.  Recall that the \emph{first-order theory  of a field $K$ in the language of rings}, denoted $\rm{Th}(K)$,  is the collection of all sentences in the language of rings (without parameters or additional constants)  that are true over $K$.   

Since we are using a countable language and since a sentence is a finite string of symbols of the language, the collection of all sentences in the language is countable and can be put into an effective bijection with $\N$.  Thus we can identify $\rm{Th}(K)$ with a subset of $\N$ and define the Turing degree of the theory to be the Turing degree of the corresponding subset of $\N$.

Using definitions above one of the main theorems of the paper can be stated as follows.

\begin{theorem*}(Corollary \ref{maincor})
\begin{enumerate}
\item For every finite group $G$ there exists a statement $S(G)$ in the first-order language of rings without parameters or additional constants such that for any field $K$ we have that $K \models S(G)$ if and only if $IGP(K,G)$ has a solution.  In other words $S(G)$ is an axiomatization of fields $K$ with solvable $IGP(K,G)$.
\item The statement $S(G)$ depends only on the multiplication table of $G$ and there exists an effective procedure taking the table of multiplication as its input and producing $S(G)$.
    \item For any field $K$ we have that $\calH_{K,d} \leq_T \rm{Th}(K)$ and the Turing reduction (i.e. the construction of the sentence $S(G)$) is independent of $K$.
\end{enumerate}
\end{theorem*}

\subsubsection{Variations on the main theorem}
By similar methods, we also prove several variations of IGP, an example of which is stated below.

\begin{theorem*}(Theorem \ref{thm:conj})
Let $H$ be a finite group of size $m$.  Let $n$ be a multiple of $m$.  Then there exists a first-order statement $\varepsilon_{H,n}(x_1,\ldots,x_n)$ in the language of rings without parameters depending on $H$ and $n$ only such that $F\models \exists \bar x \in F^n: \varepsilon_{H,n}(x_1,\ldots,x_n)$ if and only if there exists an extension $L/F$ of degree $n$ such $\Aut(L/F) \cong H$.
\end{theorem*}


\section*{Acknowledgments}
F.B. and J.P. thank the Institut Henri Poincar\'e and the organizers of the trimester \textit{\`A la r\'edecouverte des points rationnels} for allowing this project to begin. J.P. and A.S. thank American Institute of Mathematics for creating an environment where their collaboration could start.  The authors thank Arno Fehm and Laurent Moret-Bailly for helpful comments.  F.B. was partially supported by the European Union's Horizon 2020
research and innovation programme under the Marie Sk\l{}odowska-Curie grant 840684. J.P. was partially supported by NSF DMS-1902199 and DMS-2152182, A.S. was partially supported by NSF DMS-2152098. 




\section{The first-order theory of a field and its Inverse Galois Problem} \label{S:IGP}

Our goal is to prove the following theorem.

\begin{theorem}
\label{thm:irr}
 Let $G$ be a finite group of size $d$. 
Then there exists an effective procedure taking as the input the multiplication table of the group and  constructing a first-order statement $S(G)$ in the language of rings such that, for any field $K$, we have that $K \models S(G)$ if and only if $K$ has a Galois extension with the Galois group isomorphic to $G$.
\end{theorem}

\begin{remark}
For any base field under consideration, we fix once and for all an algebraic closure of that field, and we consider any extension that we construct to be inside this algebraic closure.
\end{remark}

\begin{remark}
For a given field $K$, there always exists a first-order sentence  such that it is true in $K$ if and only if $K$ has a Galois extension with Galois group isomorphic to $G$.  One could let the statement be a tautology if the extension exists, and the negation of the tautology otherwise.  However, this statement clearly depends on $K$.  The point of the theorem above is that there is an algorithm to construct a first-order sentence so that for {\it any field} $K$ we have that the statement is true over $K$ if and only if $K$ has a requisite Galois extension.
\end{remark}
The following corollary is an obvious consequence of Theorem \ref{thm:irr}. 
\begin{cor}
The Turing degree of IGP over any field  is less than or equal to the degree of the first-order theory of the field.
\end{cor}

For the remainder of the paper we use the following notation.
\begin{notation} 
\begin{itemize}
    \item $K$ is a field.
     \item For a $n$-tuple $\bar a := (a_0, \ldots, a_{n-1})$, let 
$f_{\bar a}(T) := a_0 +a_1 T+\ldots +a_{n-1}T^{n-1}+T^n$.
   \item $I_{K,n} \subset K^n$ is such that $\bar a := (a_0,\ldots, a_{n-1}) \in I_{K,n}$ if and only if the polynomial $f_{\bar a}(T)$ is  irreducible over $K$.
  
\end{itemize}
\end{notation}
In order to talk about extensions of a field we need a way to describe the irreducible (over the field) polynomials. This is not a hard task if we are using the full first-order (as opposed to existential) language.
\begin{prop}
\label{prop:definable}
For each $n \in \N$ there exists a first-order formula  $\phi_n(x_0,\ldots, x_{n - 1})$ in the language of rings without parameters where $x_0,\ldots,x_{n-1}$ are the only free variables and such that, for any field $K$ and any $\bar a :=(a_0,\ldots,a_{n-1}) \in K^n$, we have that $K \models \phi_n(a_0,\ldots,a_{n-1})$ if and only if the polynomial $f_{\bar a}(T)$ is irreducible over $K$.
\end{prop}

\begin{proof} The case for $n=1$ is clear, so we may assume that $n > 1$.
The following existential formula $F_{d,n}(x_0,\ldots,x_{n-1})$, for $d = 1, ..., n-1$, is such that  for any field $K$ and any  $(a_0,\ldots,a_{n-1}) \in K^n$ we have that $K \models F_{d,n}(a_0,\ldots,a_{n-1})$ if and only if the polynomial $f_{\bar a}(T)$ can be factored into a degree $d$ factor and a degree $n-d$ factor:
\begin{equation}
\label{eq:factor}
\begin{array}{l l}
  (F_{d,n}(a_0,\ldots,a_{n-1})) &\exists x_{0,d}, \ldots, x_{d,d}, y_{0,d}, \ldots, y_{n-d,d} \in K: \\ 
   & f(T) = (x_{0,d} + x_{1,d}T + \cdots + x_{d,d}T^d)(y_{0,d} + y_{1,d}T+ \cdots + y_{n-d,d}T^{n-d}).
\end{array}
\end{equation}

Therefore, for any $K$ and any $(a_0,\ldots,a_{n-1}) \in K^n$ we have that $K \models \bigvee_{i=1}^{n-1}F_{d,n}(a_0,\ldots,a_{n-1})$ if and only if $f_{\bar a}(T)$ is reducible over $K$.
Hence, for any field $K$ and any $(a_0,\ldots,a_{n-1}) \in K^n$ we have that $ K\models \lnot (\bigvee_{i=1}^{n-1}F_{d,n}(a_0,\ldots,a_{n-1}))$ if and only if $f_{\bar a}(T)$ is irreducible.
\end{proof}

We now consider formulas which will guarantee existence of a Galois extension associated to a given polynomial.

\begin{thm}
\label{thm:galois1} For any $n \in \N$, there exists a  formula \[\psi_n(x_0,\ldots,x_{n-1})\] in the language of rings without parameters such that, for any field $K$ and any $a_0,\ldots,a_{n-1} \in K$, we have that 
\[
K\models\psi_n(a_0,\ldots,a_{n-1})
\]
if and only if $f_{\bar a}(T)$ is irreducible over $K$ and any of its roots generates a Galois extension of $K$.
\end{thm}
\begin{proof} 
Consider the following formula $\psi_n(x_0,\ldots,x_{n-1})$, where $T$ is transcendental over $K$ and is used for convenience:
\[ 
 \exists y_{1,0}, \ldots, y_{1,n-1}, \ldots,  y_{n,0}, \ldots, y_{n,n-1}  : 
\]
\begin{equation}
\label{sys:1}
\left \{
\begin{array}{l}
\phi_n(x_0,\ldots,x_{n-1}) \\
f_{\bar x}(\sum_{j=0}^{n-1}y_{i,j}T^{j}) \equiv 0 \bmod f_{\bar x}(T), \textrm{ for all }  i\in \{1,\ldots, n\} \\
\sum_{j=0}^{n-1} y_{i,j} T^j \not \equiv \sum_{j=0}^{n-1} y_{r,j} T^j \bmod f_{\bar x}(T) \text{ for all $i,r \in \{1, \ldots, n\}$} \text{ with } i \ne r,
\end{array}
\right .
\end{equation}
where $\phi_n(x_0, ..., x_{n-1})$ is the formula from Proposition \ref{prop:definable}.

We now show that $\psi_n(x_0,\ldots,x_{n-1})$ satisfies the conditions described in the proposition.  First assume that for some $a_0,\ldots,a_{n-1} \in K$ we have that $K\models \psi_n(a_0,\ldots,a_{n-1})$. Then $\phi_n(a_0,\ldots,a_{n-1})$ is true and the polynomial $f_{\bar a}(T)$ is irreducible.  Consider an isomorphism $K[T]/(f_{\bar a}(T)) \rightarrow K(\alpha)$, sending $T$ to $\alpha$, where $\alpha$ is a root of $f_{\bar a}(T)$ in $\bar{K}$, the fixed algebraic closure of $K$.  Let $b_{1,0}, \ldots, b_{n,n-1} \in K$ be the elements satisfying the last two equations of \eqref{sys:1} in place of the $y_{i,j}$.  The second equation of \eqref{sys:1} tells us that $\sum_{j=0}^{n-1}b_{i,j}\alpha^j$ is a root of $f_{\bar a}(T)$ for each $i\in \{1,\ldots,n\}$.  Hence, for each $i \in \{1,   \ldots, n\}$ we have that $\sum_{j=0}^{n-1}b_{i,j}\alpha^j$ is a root of $f_{\bar a}(T)$ in $K(\alpha)$.  Finally, the third equation of \eqref{sys:1} implies that all the listed roots are distinct.  Hence, in $K(\alpha)$ the polynomial $f_{\bar a}(T)$ splits and therefore $K(\alpha)/K$ is a Galois extension.

We assume now that $f_{\bar a}(T)$ is irreducible, any root of $f_{\bar a}(T)$ in $\bar K$ generates a Galois extension of $K$,  and show that $K\models \psi_n(a_0,\ldots,a_{n-1})$.   

Since $f_{\bar a}(T)$ is irreducible we have that $K \models\phi_n(a_0,\ldots,a_{n-1})$.  Let $\alpha \in \bar K$ be any root of $f_{\bar a}(T)$ and once again consider an isomorphism $K[T]/(f_{\bar a}(T)) \xrightarrow{\sim} K(\alpha)$, sending $T$ to $\alpha$.  Let $\alpha_1=\alpha, \ldots, \alpha_n$ be all the conjugates of $\alpha$ over $K$ or, in other words, all the roots of $f_{\bar a}(T)$ in $\bar K$.  Since for any $i=1,\ldots, n$ the extension $K(\alpha_i)/K$ is Galois, for every $i=1,\ldots, n$ there exist $s_{i,j}$ with $ j=0,\ldots, n-1$ such that $\alpha_i=\sum_{j=0}^{n-1}s_{i,j}\alpha^j$.  Therefore if we set $y_{i,j}:=s_{i,j}$  we will have that 
\[
f_{\bar a}\left(\sum_{j=0}^{n-1}y_{i,j}T^{j}\right) \equiv 0 \bmod f_{\bar a}(T), \textrm{ for all }  i\in \{1,\ldots, n\}.
\]
Further, since the extension $K(\alpha)/K$ is separable, we have that $\alpha_i \ne \alpha_r$ and with $y_{i,j}:=s_{i,j}$ we will have that 
\[
\sum_{r=1}^{n-1} y_{i,j} T^j \not \equiv \sum_{r=1}^{n-1} y_{r,j} T^j \bmod f_{\bar a}(T) \text{ for all $i,r \in \{1, \ldots, n\}$} \text{ with } i \ne r.
\]

Thus, we conclude that there exist $y_{1,0}, \ldots, y_{1,n-1}, \ldots,  y_{n,0}, \ldots, y_{n,n-1}$ satisfying the formulas in \eqref{sys:1}.
\end{proof}

\begin{thm} 
\label{thm:Galois}
Let $n \in \mathbb{Z}_{\geq 0}$, and let $G$ be a finite group of order $n$.  There  exists a first-order formula $\theta_{G}(x_0,\ldots,x_{n-1})$ in the language of rings without parameters such that, for any field $K$ and any $a_0,\ldots, a_{n-1} \in K$, we have that $K\models \theta_{G}(a_0,\ldots,a_{n-1})$ if and only if the polynomial $f_{\bar a}(T)$ is irreducible and every root of $f_{\bar a}(T)$ in $\bar K$ generates a Galois extension of $K$ with Galois group isomorphic to $G$. Further, $\theta_G$ is effectively constructible from the table of multiplication of $G$.    
\end{thm}

\begin{proof}
Let $G$ be a group of order $n$. Let $\sigma_i$ for $i=1,\ldots, n$ be the elements of the group and let 
\begin{equation}
\label{eq:table}
\sigma_{i}\sigma_r = \sigma_{\rho_{i,r}}, \quad i,r \in \{1, \ldots, n\},
\end{equation}
where $\rho_{i,r} \in \{1, \ldots, n\}$ is uniquely determined by $i$ and $r$.  In other words, we are given a map $\rho: \{1,\ldots,n\}\times \{1,\ldots,n\} \longrightarrow \{1,\ldots,n\}$ describing the binary operation of $G$ and we denote $\rho(i,r)$ by $\rho_{i,r}$.

  Next consider the following conjunction $\theta_{G}(x_0,\ldots,x_{n-1})$:

\[
 \exists y_{1,0}, y_{1,1}, \ldots, y_{1,n-1},y_{2,0,} \ldots,  y_{n,0}, \ldots, y_{n,n-1}  : 
\]
\begin{equation}
\label{eq:group}
\left \{
\begin{array}{l}
\phi_n(x_0,\ldots,x_{n-1}) \\
f_{\bar x}(\sum_{j=0}^{n-1}y_{i,j}T^{j}) \equiv 0 \bmod f_{\bar x}(T), \textrm{ for all }  i\in \{1,\ldots, n\} \\
\sum_{j=0}^{n-1} y_{i,j} T^j \not \equiv \sum_{j=0}^{n-1} y_{r,j} T^j \bmod f_{\bar x}(T) \text{ for all $i,r \in \{1, \ldots, n\}$} \text{ with } i \ne r \\
\sum_{j=0}^{n-1}y_{i,j}\left(\sum_{m=0}^{n-1}y_{r,m}T^m\right)^j\equiv \sum_{\ell=0}^{n-1}y_{\rho_{i,r},\ell}T^{\ell} \bmod f_{\bar x}(T), \text{ for all } i,r \in \{1,\ldots,n\},
\end{array}
\right .
\end{equation}
where $f_{\bar x}(T):= T^n + x_{n-1}T^{n-1} + ... + x_1 T + x_0$ and where  $\phi_n(x_0, ..., x_{n-1})$ is the formula from Proposition \ref{prop:definable}. We observe that the first three equations are those defining $\psi_n(x_0, ..., x_{n-1})$ from Theorem \ref{thm:galois1}.\\

We claim that for any field $K$, any finite group $G$, and $n$-tuple $(a_0,\ldots,a_{n-1}) \in K^n$, we have that $K\models \theta_G(a_0,\ldots,a_{n-1})$ if and only if the polynomial $f_{\bar a}(T) $ is irreducible and any of its roots generates a Galois extension of $K$ with  Galois group isomorphic to $G$.  

First suppose that $K\models \theta_G(a_0,\ldots,a_{n-1})$.  Then $K \models  \psi_n(a_0,\ldots,a_{n-1})$, implying in particular that $f_{\bar a}(T)$ generates a Galois extension of degree $n$, where $n$ is the order of $G$.  Let $\alpha$ be a root of $f_{\bar a}(T)$.  Then $K(\alpha)/K$ is a Galois extension of $K$ of degree $n$.  Let $\alpha_i=\sum_{i,j}b_{i,j}\alpha^j$, where $y_{1,0}:=b_{1,0},\ldots, y_{n,n-1}:=b_{n,n-1} \in K$ and $b_{1,0},\ldots, b_{n,n-1}$ are the elements of $K$ whose existence is guaranteed by \eqref{eq:group}.  As in the proof of Theorem \ref{thm:galois1}, we have that $f(\alpha_i)=0$ for $i \in \{1,\ldots, n\}$ and that $\alpha_i \ne \alpha_j$ for $i\ne j$.

Given some enumeration of elements of $G=\{\sigma_1,\ldots,\sigma_n\}$ we can let $\sigma_i(\alpha)=\alpha_i$.  Then the table of multiplication of $G$ corresponds to the following equations
\begin{equation}
\label{sys:4}
\begin{array}{ll}

\sigma_r(\sigma_i(\alpha)) &= \sigma_r(\alpha_i)\\
\vspace{0.2 cm}
& \displaystyle= \sigma_r(y_{i,0} + y_{i,1}\alpha + \cdots + y_{i,{n-1}}\alpha^{n-1}) \\
&\displaystyle = (y_{i,0} + y_{i,1}\alpha_r + \cdots + y_{i,{n-1}}\alpha_r^{n-1})\\
&\displaystyle = \left( y_{i,0} + y_{i,1}\left(\sum_{m=0}^{n-1} y_{r,m}\alpha^m \right) + \cdots + y_{i,{n-1}}\left(\sum_{m=0}^{n-1} y_{r,m}\alpha^m\right)^{n-1}\right)\\
 = \sigma_{\rho_{i,r}}(\alpha)
&\displaystyle =(y_{\rho_{i,r},0} + y_{\rho_{i,r},1}\alpha + \cdots + y_{\rho_{i,r}, n-1}\alpha^{n-1}).
\end{array}
\end{equation}
Observe that, under the isomorphism $K[T]/(f_{\bar a}(T)) \longrightarrow K(\alpha)$ sending $T$ to $\alpha$, the equations from \eqref{sys:1} are equivalent to equations \eqref{eq:group} with $y_{i,j}:=b_{i,j}$.  Thus, $K \models \theta_G(a_0,\ldots,a_{n-1})$ implies that any root of $f_{\bar a}(T)$ generates a Galois extension of $K$ with the Galois group isomorphic to $G$.

Suppose now that, for a polynomial $f_{\bar a}(T):= T^n + a_{n-1}T^{n-1} + ...+ a_0 \in K[T]$, we have that any root of $f_{\bar a}(T)$ generates a Galois extension with the Galois group isomorphic to $G$.  We show that $K \models \theta_G(a_0,\ldots,a_{n-1})$.  Since $f_{\bar a}(T)$ generates a Galois extension, we have that $K\models \psi_n(a_0,\ldots,a_{n-1})$  by Theorem \ref{thm:galois1}.  Therefore  there exist $b_{1,0}, \ldots, b_{n,n-1} \in K$ such that, under the assignment $y_{1,0}:=b_{1,0}, ..., y_{n, n-1}:=b_{n, n-1}$, the first three equations of \eqref{eq:group} are satisfied.  It remains to be shown that the fourth equation is satisfied with the assignment of values from $K$.

Let $\alpha$ be  the image of $T$ under the isomorphism of $K[T]/f_{\bar a}(T)$ and $K(\alpha)$.  Let $\sigma_i(\alpha):=\alpha_i:=\sum_{i,j}b_{i,j}\alpha^j,$ for $ i\in \{1,\ldots, n\}$ and $ j\in \{0, \ldots, n-1\}$. Then the equation $\sigma_r\circ \sigma_i=\sigma_{\rho_{i,j}}$ implies that \eqref{sys:4} holds and, by the equivalence described above, \eqref{eq:group}  holds.
\end{proof}

The theorem above provides an explicit link between IGP($K,G$) and Th($K$). Recall from Section \ref{Hld} that $\calH_{K,n}$ is in bijection with the collection of all groups $G$ of size $n$ for which a Galois extension of $K$ with Galois group $G$ exists. Thus we have the following corollary.

\begin{cor}\label{maincor}
Let $K$ be a countable field. Then the following are true.
\begin{enumerate}
\item  Let $G$ be a finite group of size $n$. Then there exists a first-order sentence $\lambda_G$, dependent on $G$ only, such that $K \models \lambda_G$ if and only if $\textrm{\normalfont IGP}(K,G)$ has a solution.
\item  $\calH_{K,n} \leq_T \textrm{\normalfont Th}(K)$ and the reduction is uniform in $K$. 
\item If $\textrm{\normalfont Th}(K)$ is decidable, then there is an algorithm (possibly dependent on $K$) to decide whether $\textrm{\normalfont IGP}(K,G)$ is true, for any finite group $G$.
\end{enumerate}
\end{cor}
\begin{proof}
\begin{enumerate}
   \item Let $\lambda_G$ be the first-order sentence $\exists x_0, \ldots, x_{n-1}: \theta_G(x_0,\ldots,x_{n-1})$.  Suppose $\text{IGP}(K,G)$ has a solution.  Then there exists $f_{\bar a}(T) \in K[T]$,  a monic irreducible polynomial such that its root generates a Galois extension $L/K$ with $\Gal(L/K) \isom G$.  In this case, by Theorem \ref{thm:Galois}, we have that $K\models \theta_G(a_0,\ldots,a_{n-1})$ and consequently $K \models \lambda_G$.

    Conversely, suppose $K \models \lambda_G$.  Then by Theorem \ref{thm:Galois} there exist $a_0,\ldots,a_{n-1} \in K$ such that $K \models \theta_G(a_0,\ldots,a_{n-1})$ implying that $f_{\bar a}(T)$ is irreducible over $K$ and its root generates a Galois extension of $K$ with the Galois group isomorphic to $G$.
 \item From Part (1) of the Corollary, it follows that to determine whether the label for the finite group $G$ of size $n$ given by $\sigma$ (see Section \ref{Hld})  is in $\calH_{K,n}$, it is enough to determine whether $\lambda_G$ is true over $K$.  Since the procedure for constructing $\lambda_G$ is independent of $K$, the reduction $\calH_{K,n} \leq_T \textrm{Th}(K)$ is independent of $K$.
 \item The last part follows from Part (2) of the Corollary. \qedhere
 \end{enumerate}
\end{proof}

\begin{remark}
Note that if $\textrm{\normalfont Th}(K)$ is decidable and $\textrm{\normalfont IGP}(K,G)$ is true, then there is an algorithm to explicitly construct a polynomial with coefficients in $K$ producing the required extension. We need only to systematically check all $n$-tuples of elements of $K$ to find one that corresponds to an irreducible polynomial and such that the resulting system of equations has solutions in other variables in $K$.  
\end{remark}
We now prove a slightly different version of Theorem \ref{thm:Galois} that we will need in the future.

\begin{prop}
\label{prop:tower}
    Let $n, m \in \mathbb{Z}_{\geq 0}$, and let $H, G$ be finite groups of orders $n$ and $m$ respectively.  There  exists a first-order formula $\theta_{H,G}(x_0,\ldots,x_{n-1},y_{0,0}, \ldots, y_{m-1,n-1})$ in the language of rings without parameters such that, for any field $F$ (with algebraic closure $\overline{F}$) and any $a_0,\ldots, a_{n-1}, c_{0,0}, \ldots, c_{n-1,m-1} \in F$, we have that $F\models \theta_{H,G}(a_0,\ldots,a_{n-1},c_{0,0}, \ldots, c_{n-1,m-1})$ if and only if
    \begin{enumerate}
        \item  the polynomial $f_{\bar a}(T)$ is irreducible over $F$ and every root of $f_{\bar a}(T)$ in $\bar F$ generates a Galois extension of $F$ with Galois group isomorphic to $H$; and
        \item if $\beta$ is any root of $f_{\bar a}$ in $\bar F$ and $\bar b=(b_0,\ldots,b_{m-1})$ is such that $b_i=\sum_{j=0}^{n-1}c_{i,j}\beta^j$ for $ i=0,\ldots, m-1$,  then $f_{\bar b}$ is irreducible over $F(\beta)$ and any root of $f_{\bar b}$ generates a Galois extension $F(\gamma, \beta)$ of $F(\beta)$ with the Galois group isomorphic to $G$.
    \end{enumerate}
    
    Further, $\theta_{H,G}$ is effectively constructable from the tables of multiplication of $G$ and $H$.    
\end{prop}
\begin{proof}
    To facilitate the discussion below we let $b_m=1$, $c_{m, j} = 0$ for $j \neq 0$ and $c_{m, 0}=1$. We start with modifying Proposition \ref{prop:definable} for the field $F(\beta)$, where  $\beta$ is any root of $f_{\bar a}$. 
    For any $d = 1, ..., m-1$, we let $R_{d,n, m}(\bar a, \bar b,x_{0,0,1},\ldots,x_{d,n-1,d})$ denote the following formula:
    \begin{equation}
\label{eq:factor1}
\begin{array}{c}
    \exists  y_{0,0,m-d}, \ldots, y_{m-d,n-1,m-d}: \\ 
   \displaystyle f_{\bar b}(U) = \sum_{i=0}^{m}b_iU^i=\sum_{i=0}^{m}\sum_{j=0}^{n-1}c_{i,j}\beta^jU^i=
   \left (\sum_{i=0}^d\sum_{j=0}^{n-1}x_{i,j,d}\beta^jU^i\right)\left (\sum_{i=0}^{m-d}\sum_{j=0}^{n-1}y_{i,j,m-d}\beta^jU^i \right)
\end{array}
\end{equation}
encoding a system of equations over $F(\beta)$ we can construct by treating $U$ as transcendental over $F(\beta)$.  To get rid of $\beta$, as above we use a variable $T$, where $T$ and $U$ are algebraically independent over $F$, and replace equations by congruences modulo $f_{\bar a}(T)$:
\begin{equation}
\label{eq:factor2}
\begin{array}{c}
    \exists  y_{0,0,m-d}, \ldots, y_{m-d,n-1,m-d}: \\ 
   f_{\bar b}(U,T) = \sum_{i=0}^{m}b_iU^i=\sum_{i=0}^{m}\sum_{j=0}^{n-1}c_{i,j}T^jU^i \equiv\\
   \left (\sum_{i=0}^d\sum_{j=0}^{n-1}x_{i,j,d}T^jU^i\right)\left (\sum_{i=0}^{m-d}\sum_{j=0}^{n-1}y_{i,j,m-d}T^jU^i \right) \bmod f_{\bar a}(T).
\end{array}
\end{equation}
Similarly to what we had for $f_{\bar a}(T)$, the irreducibility of $f_{\bar b}(U)$ corresponds to the sentence $\pi_{n,m}(\bar a,c_{0,0}, \ldots, c_{m,n-1})$ given by
\begin{equation}
    \label{le:irreducible}
\forall x_{0,0,1},\ldots,x_{m-1,n-1,m-1} \bigwedge_{d=1}^{m-1}\lnot R_{d,n,m}(\bar a, c_{0,0}, \ldots, c_{m, n-1}, x_{0,0,d},\ldots,x_{d,n-1,d}).
\end{equation}
Let $G=\{\tau_1=\id,\ldots,\tau_m\}$ and let $\mu: \{1,\ldots, m\}\times \{1,\ldots, m\} \longrightarrow \{1,\ldots,m\}$ be the map representing multiplication table of $H$ meaning $\tau_i\circ \tau_j=\tau_{\mu_{i,j}}$.
Then 
\[
F \models \theta_{H, G}(a_0,\ldots, a_{n-1}, c_{0,0}, \ldots, c_{n-1,m-1})
\]
is equivalent to all the following conditions holding:

\begin{itemize}
    \item $F\models \theta_H(a_0,\ldots,a_{n-1})$, where a root of the irreducible polynomial $f_{\bar a}(T)$ generates a Galois extension over $F$ with Galois group $H$;
    \item $F \models \pi_{m,n}(\bar a,\bar c)$, meaning that $f_{\bar b}(U) \in F(T)[U]$ is irreducible with $b_i\equiv \sum_{j=0}^{n-1}c_{i,j}T^j \bmod f_{\bar a}(T)$ for all $i \in \{0, \ldots , m-1\}$;
    \item $F\models \exists v_{0,0,0}, \ldots, v_{m-1,m-1,n-1} \in F: $
    \begin{itemize} 
    \item $f_{\bar b}(\sum_{j=0}^{m-1}u_{i,j}U^{j}) \equiv 0 \bmod f_{\bar b}(U)$, for all $i\in \{1,\ldots, m\}$, where 
    \[
    u_{i,j}\equiv\sum_{r=0}^{n-1} v_{i,j,r}T^r \bmod f_{\bar a}(T);\]
    \item $\sum_{j=0}^{m-1} u_{i,j} U^j \not \equiv \sum_{j=0}^{m-1} u_{r,j} U^j \bmod f_{\bar b}(U)$ for all $i,r \in \{1, \ldots, m\}$ with  $i \ne r$;
    \end{itemize}
    \item $F\models \exists w_{0,0,0}, \ldots, w_{m-1,m-1,n-1} \in F$:
    \begin{itemize}
       \item $\tau_i(U)=:U_i\equiv\sum_{r=0}^{m-1}z_{i,r}U^r \bmod f_{\bar b}(U), $ 
       where      
\[
z_{i,r}\equiv \sum_{s=0}^{n-1}w_{i,r,s}T^s \bmod f_{\bar a}(T), 
\]
\item 

\[
\begin{array}{lll}
\tau_i\tau_j(U) & =\tau_i(U_j)\\
&=\tau_i(\sum_{r=0}^{m-1}z_{j,r}U^r)\\
&=\sum_{r=0}^{m-1}z_{j,r}U_i^r\\
&= \sum_{r=0}^{m-1}z_{j,r}(\sum_{s=0}^{m-1}z_{i,s}U^s)^r\\
&=\tau_{\mu_{i,j}}(U)\\
&\equiv U_{\mu_{i,j}} & \bmod f_{\bar b}(U)\\
&\equiv \sum_{s=0}^{m-1}z_{\mu_{i,j},s}U^s &\bmod f_{\bar b}(U). 
\end{array} 
\]
\qedhere
\end{itemize}
\end{itemize}
\end{proof}

\begin{prop}
\label{prop:biggroup}
    Let $H$ be a subgroup of the finite group $S$. Let $\pi \colon S\longrightarrow H$ be an epimorphism. Let $F$ be a field.    Then there exists a first-order statement $\nu_{F, H, S, \pi}$ in the language of rings such that $F \models \nu_{F, H, S, \pi}$ if and only if there exist Galois extensions $E \subset L$ of $F$ such that $\Gal(L/F) \cong S, \Gal(E/F)\cong H$ and $\pi$ is the natural restriction map $\Gal(L/F) \to \Gal(E/F)$.
\end{prop}
\begin{proof}
   By Proposition  \ref{thm:Galois} there exist first-order statements $\theta_H(\bar x)$ and $\theta_S(\bar y)$ such that 
   \[F \models \theta_H(\bar x) \land \theta_S(\bar y)\] 
   if and only if there exist irreducible polynomials $f_{\bar x}$ and $f_{\bar y}$ over $F$ such that the root of $f_{\bar x}$ generates a Galois extension $E/F$ with $\Gal(E/F) \cong H$, a root of $f_{\bar y}$ generates a Galois extension $L/F$ with $\Gal(L/F) \cong S$. Let $S=\{\sigma_1=\id,\ldots,\sigma_n\}$ and let $H=\{\mu_1=\id,\mu_2,\ldots, \mu_m\}$.  Then $\pi: \{1,\ldots, n\} \longrightarrow \{1,2,\ldots,m\}$.  Now consider the following conjunction of first-order statements:
   \[\begin{cases}
   \begin{array}{l}
   \theta_H(\bar x),\\
   \theta_S(\bar y),\\
   f_{\bar x}(\sum_{i=0}^nu_iT^i)\equiv 0 \bmod f_{\bar y}(T),\\
    U=\sum_{i=0}^nu_iT^i, \\
    \sigma_i(U)\equiv \mu_{\pi(i)}(U) \bmod f_{\bar y}(T), i=1,\ldots, n,
    \end{array}
    \end{cases}\]
   where equations specifying the action of $\sigma_i$ and $\mu_{\pi(i)}$ are part of $\theta_S$ and $\theta_H$ respectively, e.g. these are the equations \eqref{eq:group} for $S$ and $H$.
\end{proof}

We can also combine Propositions \ref{prop:tower} and \ref{prop:biggroup} to get the following proposition.
\begin{prop}
\label{prop:together}
    Let $H, G, S$ be finite groups such that $G \unlhd S$ and $H \cong S/G$.  Let $|S|=:s, |G|=:m, |H|=:r=s/m$. Let $F$ be any field.  Let $\bar a \in F^r,\bar b_i \in F^{r}, i=0,\ldots, m-1, \bar c \in F^s$.  Let $\pi: S \longrightarrow H$.  Then there exists a first-order statement  $F \models \mu_{H,G,S,\pi}(\bar a, \bar b_0, \ldots,\bar b_{m-1}, \bar c)$ if and only if the following statements are true.
    \begin{enumerate}
        \item A root $\alpha$ of the irreducible polynomial $f_{\bar a}(T)$generates a Galois extension $E$ of $F$ with $\Gal(E/F) \cong H$.
        \item If $d_i=\sum_{j=0}^{r-1}b_{i,j}\alpha^j, i=0,\ldots,m-1$, then $f_{\bar d}$ is irreducible over $E=F(\alpha)$ and any root $\beta$ of $f_{\bar d}$ generates an extension $L=E(\beta)$ such that $\Gal(L/E) \cong G, \Gal(L/F)\cong S$.
        \item Any root $\gamma$ of the irreducible polynomial  $f_{\bar c}(T)$ generates $L=F(\gamma)$.
        \item $\pi$ is the natural restriction map $\Gal(L/F) \to \Gal(E/F)$.
    \end{enumerate}
\end{prop}
\begin{remark}
    Equation \eqref{sys:4} provides a way of converting equations involving group elements into equations over a given field.  In other words, if $\sigma_i, \sigma_j, \sigma_r$ are elements of some group $G$, then the equation $\sigma_i \circ \sigma_j=\sigma_r$ holds if and only if the given field has a Galois extension $F$ generated by the roots $\alpha=\alpha_1,\ldots, \alpha_n$ of a polynomial $f$ over $F$ with $n$ equal to the size of the group, and under some ordering of the roots, there exists automorphisms of the extension sending $\alpha$ to $\alpha_i$ and sending $\alpha$ to $\alpha_j$ that if composed send $\alpha$ to $\alpha_r$.  In fact, given an $n\times n$ table which might or might not correspond to a table of multiplication of a group, one can establish whether the table corresponds to a group $G$ such that $F$ has a Galois extension with such a group.
\end{remark}
From the remark above we can conclude that the following proposition holds.
\begin{prop}
 For every $\ell \in \Z_{>0}$ and every map from $f: \{1,\ldots,\ell\} \times \{1,\ldots,\ell\} \longrightarrow \{1,\ldots,\ell\}$, there exists a first-order statement $\Gamma_{\ell,f}$  without parameters such that for any field $F$ we have that $F\models \Gamma_{\ell,f}$ if and only if $F$ has a Galois extension of size $\ell$ and $\phi$ represents the table of multiplication of the Galois group.
\end{prop}
This proposition has the following corollary which we will use below in the discussion of "Inverse Automorphism Problem".
\begin{cor}
\label{cor:invaut}
    For each pair $(n,m)$ with $n\equiv 0 \bmod m$ and a finite group $H$ of size $m$, there exists a first-order statement $\Sigma_{n,H}$ without parameters such that for any field $K$ we have that $K\models \Sigma_{n,H}$ if and only if there exists a positive integer $\ell \equiv 0 \bmod n$ with $n \leq \ell \leq n!$, a Galois extension $M/K$ of degree $\ell$ such that $\Gal(M/K)$ contains a subgroup $S$ of size $\ell/n$ with exactly $n/m$ distinct conjugates and the automorphism group of the field $L:=M^S$ over $K$ is isomorphic to $H$.
\end{cor}

\section{Translating generalizations of IGP into the first-order language of rings}
In this section, we consider two generalizations of IGP.  The first generalization is   a general version of \emph{Finite Split Embedding Problem} (FSEP), while the second one is the \emph{Inverse Automorphism Problem} (IAP).
\subsection{Translating FSEP into the first-order language of fields} The
Finite Split Embedding Problem FSEP$(E/F, G, \phi, \pi)$ takes the following input:
\begin{itemize}
\item a base field $F$ and a finite Galois extension $E/F$ with $H:=\Gal(E/F)$;
\item a finite group $G$ with an action $\phi: H \to \Aut(G)$, which gives rise to the semi-direct product $G \rtimes_\phi H$;
\item the semi-direct product $G \rtimes_\phi H$ comes equipped with an epimorphism $\pi:G \rtimes_\phi H \longrightarrow H$ of finite groups.
\end{itemize}
A solution to the FSEP is a Galois extension $L$ of $F$ that extends $E$, together with an isomorphism $i: \Gal(L/F) \xrightarrow{\sim} G \rtimes_{\phi}H$ such that $\pi \circ i = \textrm{res}_{E} : \Gal(L/F) \to \Gal(E/F)$.

 We also consider a \emph{generalized} version of FSEP, namely GFSEP$(F, H, G, \phi, \pi)$,  where the data consists of the base field $F$, the three groups $G$, $H$, $G \rtimes_\phi H$,  and the projection $\pi$.  A solution to GFSEP$(F, H, G, \phi, \pi)$ consists of a Galois field extension $E/F$ with $\Gal(E/F) \cong H$ such that FSEP$(E/F, G, \phi, \pi)$ is solvable.

Let us assume that we are given a base field $F$, the three finite groups $G$, $H$, $G \rtimes_\phi H$ (given by their table of multiplication),  and the projection $\pi$  as above. We will write down explicitly two sentences in the language of rings without parameters:
\begin{enumerate}
    \item A sentence $\mu_{H, G, \phi, \pi}$ such that $F \models \mu_{H, G, \phi, \pi}$ if and only if GFSEP$(F, H, G, \phi, \pi)$ has a solution.
    \item A sentence $\nu_{H, G,\phi,\pi,\bar a}$ such that $F\models \nu_{H, G,\phi,\pi,\bar a}$ if and only if $f_{\bar a}(T)$ is an irreducible polynomial over $F$ generating Galois extension $E/F$ such that FSEP$(E/F, G, \phi, \pi)$ has a solution.
\end{enumerate}
Before we begin our construction we describe notation to be used below:
\begin{enumerate}
    \item Given $G, H, \phi$, we have that the group $G\rtimes_{\phi} H$ is determined uniquely up to an isomorphism.  We assume that we are given the tables of multiplication for $G\rtimes_{\phi} H, G$ and $H$ as well as the epimorphism $\pi$.  
    \item If $f(T)$ is an irreducible polynomial over $F$ such that its root $\alpha$ generates a Galois extension $L$ over $F$ with Galois group isomorphic to  $G\rtimes_{\phi} H$,  then we denote its degree by $m:=|G\rtimes_{\phi} H|=|G||H|$.  
    
    \item Let $E$ be the fixed field of $G$ and let $g(T)$ be an irreducible polynomial over $F$ of degree $r:=|H|$ of some generator of $E$ over $F$.  Let $\beta$ be a root of $g(T)$. Let $\gamma$ be a generator of $L/E$. Observe that $\Gal(F(\alpha)/F(\beta))\cong G$ and $\Gal(F(\beta)/F) \cong H$ and $\pi: \Gal(F(\alpha)/F) \rightarrow \Gal(F(\beta)/F)$. 
\end{enumerate}

 To summarise, we have the following diagram:
 
\[
\xymatrix{
L=F(\alpha)=E(\gamma)  \\
E=F(\beta) \ar@{-}[u]_{G = \Gal(L/E)} \\
F \ar@{-}[u]_{H = \Gal(E/F)}\ar@{-}@/^3pc/{-}^{G \rtimes_{\phi} H}
}
\]

Using Proposition \ref{prop:together} we can now prove the following theorem.
\begin{theorem}
    There exists a first-order formula $\nu_{H, G, \phi, \pi}$ in the language of rings without parameters such that for any field $F$ we have that
    \[
    F \models \nu_{H, G, \phi, \pi}
    \]
     if and only if GFSEP$(F, H, G, \phi, \pi)$ is solvable.
\end{theorem}

\begin{proof}
We apply Proposition \ref{prop:together} with $S=G\rtimes_{\phi} H$.  Then GFSEP$(F, H, G, \phi, \pi)$ is solvable if and only if $F \models \exists \bar x \in F^r, \bar y_0 \in F^r,\ldots, \bar y_{m-1} \in F^r,\bar z \in F^s: \mu_{H,G,S,\pi}(\bar x, \bar y_0, \ldots,\bar y_{m-1}, \bar z)$.

\end{proof}
To construct a first-order statement corresponding to FSEP we need to take into consideration that the extension $E/F$ is fixed. So we choose polynomial $f_{\bar a}(T)$ such that any of its roots generate $E$ over $F$ and replace the first set of variables in $\mu_{H,G,S,\pi}$ by $\bar a$.
\begin{theorem}
    There exists a first-order formula $\delta_{G, \phi, \pi}$ in the language of rings without parameters such that for any Galois extension $E/F$  we have that
    \[
    F \models \nu_{H, G, \phi, \pi}
    \]
     if and only if FSEP$(E/F, H, G, \phi, \pi)$ is solvable, where $H= \Gal(E/F)$ is also given.
\end{theorem}
\begin{proof}
    We apply Proposition \ref{prop:together} with $S=G\rtimes_{\phi} H$ and $f_{\bar a}$ being a monic irreducible polynomial over $F$ of a generator of $E/F$.  Then GFSEP$(E/F, G, \phi, \pi)$ is solvable if and only if $F \models \exists  \bar y_0 \in F^r,\ldots, \bar y_{m-1} \in F^r,\bar z \in F^s: \mu_{H,G,S,\pi}(\bar a, \bar y_0, \ldots,\bar y_{m-1}, \bar z)$, where $H=\Gal(E/F)$.
\end{proof}


\subsection{Translating the Inverse Automorphism Problem into the first-order language of the fields} 
The Inverse Automorphism Problem (IAP), suggested to us by Arno Fehm, takes the following input:
\begin{itemize}
\item a base field $K$;
\item a finite group $H$, say of size $|H|=:m$;
\item an positive integer $n$ which is a multiple of $m$.
\end{itemize}
A solution to the IAP is a finite extension $L$ of $K$ of degree $n$ (not necessarily Galois over $K$) with automorphism group over $K$ isomorphic to $H$. 

We prove the following result.
\begin{theorem}
\label{thm:conj}
 Let $H$ be a finite group of size $m$.  Let $n$ be a multiple of $m$.  Then there exists a first-order statement $\varepsilon_{H,n}(x_1,\ldots,x_n)$ in the language of rings without parameters depending on $H$ and $n$ only such that $F\models \exists \bar x \in F^n: \varepsilon_{H,n}(x_1,\ldots,x_n)$ if and only if there exists an extension $L/F$ of degree $n$ such $\Aut(L/F) \cong H$.

\end{theorem}

To prove the theorem we use the following proposition.

\begin{prop}
Let $m, n$ be given positive integers with $n \equiv 0 \bmod m$.   Let $K$ be a field, let $M$ be a Galois extension of $K$ of degree $\ell$, and $n \leq \ell \leq n!$. A degree $n$ extension $L$ of $K$ inside $M$ with exactly $m$ automorphisms exists if and only if  there exists a subgroup $S$ of $G := \Gal(M/K)$ of index $n$ with exactly $n/m$ conjugates in $G$. In this case, $L = M^S$, the fixed field of $S$ in $M$.
\end{prop}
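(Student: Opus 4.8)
The plan is to translate the field-theoretic statement into group theory via the Galois correspondence for the (fixed) extension $N/L$, and then count automorphisms of an intermediate field in terms of the normalizer of the corresponding subgroup. First I would fix $G := \Gal(N/L)$ and recall that intermediate fields $M$ with $L \subseteq M \subseteq N$ correspond bijectively to subgroups $S \le G$ via $M = N^S$, with $[M:L] = [G:S]$; so a degree-$n$ subextension $M$ inside $N$ corresponds exactly to a subgroup $S$ of index $n$ in $G$. The content of the proposition is then the identity $\#\Aut(M/L) = [N_G(S):S]$, together with the observation that the number of conjugates of $S$ in $G$ is $[G:N_G(S)]$, so that the condition ``$M$ has exactly $m$ automorphisms'' is equivalent to ``$[N_G(S):S] = m$'', which (using $[G:S] = n$) is equivalent to ``$[G:N_G(S)] = n/m$'', i.e. $S$ has exactly $n/m$ conjugates in $G$.

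The key step to justify carefully is the formula $\#\Aut(M/L) = [N_G(S):S]$. I would argue as follows. An $L$-automorphism of $M$ need not extend to $N$ in general, but here $N/L$ is Galois (hence normal), so $N/M$ is Galois and $N$ is a normal closure situation: every $L$-embedding $M \hookrightarrow \Kbar$ lands in $N$, and in particular every $\sigma \in \Aut(M/L)$ extends to some $\tilde\sigma \in G = \Gal(N/L)$. Conversely, $\tilde\sigma \in G$ restricts to an automorphism of $M$ precisely when $\tilde\sigma(M) = M$, which by the correspondence means $\tilde\sigma S \tilde\sigma^{-1} = S$, i.e. $\tilde\sigma \in N_G(S)$; and two elements of $N_G(S)$ restrict to the same automorphism of $M$ iff they differ by an element of $\Gal(N/M) = S$. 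This gives a bijection $N_G(S)/S \xrightarrow{\sim} \Aut(M/L)$, hence $\#\Aut(M/L) = [N_G(S):S]$.

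With this in hand the proof assembles quickly. For the forward direction: given a degree-$n$ subextension $M \subseteq N$ with exactly $m$ automorphisms, set $S := \Gal(N/M)$; then $[G:S] = n$ and $[N_G(S):S] = \#\Aut(M/L) = m$, so $[G:N_G(S)] = [G:S]/[N_G(S):S] = n/m$, i.e. $S$ has $n/m$ conjugates. For the converse: given a subgroup $S \le G$ with exactly $n/m$ conjugates, note $[G:N_G(S)] = n/m$; I would still need $[G:S] = n$, which is where the hypothesis enters — in the intended application $S$ is produced together with this index constraint, so I would state the converse as: if $S \le G$ has index $n$ and exactly $n/m$ conjugates in $G$, then $M := N^S$ has degree $n$ over $L$ and $\#\Aut(M/L) = [N_G(S):S] = [G:S]/[G:N_G(S)] = n/(n/m) = m$. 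The parenthetical bound $n \le \ell \le n!$ is just the range in which such an $N$ can exist (the Galois closure of a degree-$n$ extension has degree dividing $n!$ and at least $n$), and plays no role in the equivalence itself.

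The main obstacle I anticipate is the precise handling of automorphisms of $M$ that a priori do not extend to $N$: one must use normality of $N/L$ to guarantee every $L$-embedding of $M$ into an algebraic closure in fact maps $M$ into $N$, so that $\Aut(M/L)$ is captured entirely by $N_G(S)/S$ and no automorphisms are missed. Once that point is pinned down — it is a standard consequence of $N/L$ being normal and $M$ lying between — the rest is the bookkeeping of indices in the Galois correspondence, and I would keep it brief.
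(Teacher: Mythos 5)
Your proof is correct and follows essentially the same route as the paper's: both reduce the statement to the Galois correspondence for $N/L$ together with the count $[G:S]=\#\Aut(M/L)\cdot\#\{\text{conjugates of }S\}$. The only cosmetic difference is that you package this count as $\#\Aut(M/L)=[N_G(S):S]$ and $\#\{\text{conjugates}\}=[G:N_G(S)]$ on the group side, whereas the paper partitions the $n$ embeddings of $M$ into $N$ according to their image field $\hat M$; your explicit remark that the converse requires $S$ to be given with index $n$ in $G$ is a point the paper leaves implicit, and is worth keeping.
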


\begin{proof}
Assume first that there exists a Galois extension $M$ of $K$ of degree $\ell$ such that $\ell \equiv 0 \bmod n$ with $\ell \leq n!$, and $G=\Gal(M/K)$ contains a subgroup $S$ of $G$ of size $\ell/n$ with exactly $n/m$ conjugates in $G$. 
Then we claim that $L = M^S$ gives the desired extension. 

Clearly, $[L:K]=\ell/|S|=n$.  We now show that $L$ has $m$ automorphisms over $K$. Let $S_1=S, \ldots, S_{n/m}$ be all of the distinct conjugates of $S$ in $G$.  If $S_i=\sigma_iS\sigma_i^{-1}$ for some $\sigma_i \in \Gal(M/K)$, then $\sigma_1(L)=L,\ldots, \sigma_{n/m}(L)$ are distinct conjugates of $L$ over $K$ in $M$ since $S_i=\Gal(M/\sigma_i(L))$. Further, if $\hat L=\sigma(L)\ne L$ is a conjugate of $L$, then $\sigma S\sigma^{-1}=\hat S=\Gal(M/\hat L)$ and $\hat S \ne S$ is a conjugate of $S$. Thus, there are exactly $n/m$ conjugate subfields $\hat L$ of $L$ in $M$. 
\[
\xymatrix{
M \\
L \ar@{-}[u]_{|S|} \ar[r]^{\sigma} & \hat{L} \ar@{-}[dl] \ar@{-}@/_1.5pc/{-}_{|\sigma S \sigma^{-1}| = |S|}\\
K \ar@{-}[u]_{\ell/|S| =n}\ar@{-}@/^2pc/{-}^{|\text{Gal}(M/K)|=\ell}
}
\]

Fix a $\sigma \in G$ with an induced isomorphism $L \to \hat L$. If $\phi \in G$ induces an automorphism of $L$, then $\sigma \circ \phi$ restricts to an isomorphism $L \to \hat L$ as well. 
Conversely, given any $\tau\in G$ inducing an isomorphism $L \to \hat L$, we get a $K$-automorphism of $L$ via $\tau^{-1}|_{\hat L}\circ \sigma|_L: L \to L$.

Now, we know that there are $n = [L:K]$ embeddings of $L$ into $M$, and by the above argument, these automorphisms can be broken up as
\begin{align}
\label{eqn:embeddings}
n & = \#\{K\textup{-automorphisms of }L\} \cdot \#\{\textup{conjugate fields $\hat L$ of $L$ in $M$}\}\\
& =
\#\{K\textup{-automorphisms of }L\} \cdot \#\{\textup{conjugate groups $\hat S$ of $S$ in $G$}\}.
\end{align}
Therefore, we have that the number of distinct $K$-automorphisms of $L$ is equal to $n/(n/m)=m$, as required.

Conversely, suppose now that $K\subseteq L \subseteq M$, where $M/K$ is Galois, satisfies $[L:K] = n$ and $L$ has exactly $m$ automorphisms. Let $S=\Gal(M/L)$. We assume that $m <n$, since otherwise  $L/K$ is Galois, and  we are in the case considered in the preceding section.  Since $m < n$, the extension $L/K$ is not Galois, and therefore $S=\Gal(M/L)$ is not normal. Now we use Equation \eqref{eqn:embeddings} to conclude that the number of conjugates of $S$ is $n/m$, as required.
\end{proof}

Now the proof of Theorem \ref{thm:conj} follows from Corollary \ref{cor:invaut}.



\begin{bibdiv}
\begin{biblist}

\bib{Har87}{incollection}{
      author={Harbater, David},
       title={Galois coverings of the arithmetic line},
        date={1987},
   booktitle={Number theory ({N}ew {Y}ork, 1984--1985)},
      series={Lecture Notes in Math.},
      volume={1240},
   publisher={Springer, Berlin},
       pages={165\ndash 195},
         url={https://doi.org/10.1007/BFb0072980},
      review={\MR{894511}},
}

\bib{Koe04}{article}{
      author={Koenigsmann, Jochen},
       title={The regular inverse {G}alois problem over non-large fields},
        date={2004},
        ISSN={1435-9855},
     journal={J. Eur. Math. Soc. (JEMS)},
      volume={6},
      number={4},
       pages={425\ndash 434},
         url={https://doi.org/10.4171/JEMS/15},
      review={\MR{2094398}},
}

\bib{Pop96}{article}{
      author={Pop, Florian},
       title={Embedding problems over large fields},
        date={1996},
        ISSN={0003-486X},
     journal={Ann. of Math. (2)},
      volume={144},
      number={1},
       pages={1\ndash 34},
         url={https://doi.org/10.2307/2118581},
      review={\MR{1405941}},
}

\bib{Soare87}{book}{
author={Soare, Robert},
title={Recursively Enumerable Set and Degrees},
publisher={Springer Verlag},
year={1987},
series = {Perspectives in Mathematical Logic},
}
\end{biblist}
\end{bibdiv}

\end{document}